\theoremstyle{change}%
\newtheorem{definition}{Definition:}[section]%
\newtheorem{theorem}[definition]{Theorem:}%
\newtheorem{proposition}[definition]{Proposition:}%
\newtheorem{lemma}[definition]{Lemma:}%
\newtheorem{corollary}[definition]{Corollary:}%
{\theorembodyfont{\rmfamily} \newtheorem{remark}[definition]{Remark:}}%
{\theorembodyfont{\rmfamily} }%
\newenvironment{proof}
  {{\bf Proof:}}
  {\qquad \hspace*{\fill} $\Box$}%
\newcommand{\inner}{\operatorname{int}}%
\newcommand{\cl}{\operatorname{cl}}%
\newcommand{\diam}{\operatorname{diam}}%
\newcommand{\CC}{\mathcal{C}}%
\newcommand{\OC}{\mathcal{O}}%
\newcommand{\QC}{\mathcal{Q}}%
\newcommand{\SC}{\mathcal{S}}%
\newcommand{\UC}{\mathcal{U}}%
\newcommand{\T}{\mathbb{T}}%
\newcommand{\N}{\mathbb{N}}%
\newcommand{\Z}{\mathbb{Z}}%
\newcommand{\R}{\mathbb{R}}%
\newcommand{\ep}{\varepsilon}%
\newcommand{\tm}{\times}%
\newcommand{\rmd}{\mathrm{d}}%
\newcommand{\rme}{\mathrm{e}}%
\newcommand{\rmO}{\mathrm{O}}%
\newcommand{\ess}{\operatorname*{ess\;sup}}%
\newcommand{\inv}{\operatorname{inv}}%
\begin{document}

\title{On the structure of uniformly hyperbolic chain control sets}%

\author{Christoph Kawan\footnote{Universit\"at Passau, Fakult\"at f\"ur Informatik und Mathematik, Innstra{\ss}e 33, 94032 Passau, Germany; christoph.kawan@uni-passau.de; Phone: +49(0)851 509 3363}}%
\date{}%
\maketitle%

\begin{abstract}
We prove the following theorem: Let $Q$ be an isolated chain control set of a control-affine system on a smooth compact manifold $M$. If $Q$ is uniformly hyperbolic without center bundle, then the lift of $Q$ to the extended state space $\UC\tm M$, where $\UC$ is the space of control functions, is a graph over $\UC$. In other words, for every control $u\in\UC$ there is a unique $x\in Q$ such that the corresponding state trajectory $\varphi(t,x,u)$ evolves in $Q$.%
\end{abstract}

{\small {\bf Keywords:} Nonlinear control; control-affine system; chain control set; uniform hyperbolicity}

{\small {\emph AMS Subject Classification (2010)}: 93C10; 93C15; 93B05; 37D05; 37D20}%

\section{Introduction}%

The notion of a uniformly hyperbolic set, which axiomatizes the geometric picture behind the ``horseshoe'', a general mechanism for producing complicated dynamics, was introduced by Smale in the 1960s. A uniformly hyperbolic set of a diffeomorphism $g:M\rightarrow M$ on a compact Riemannian manifold $M$ is a closed invariant set $\Lambda$ such that the tangent bundle over $\Lambda$ splits into two subbundles, $T\Lambda = E^s \oplus E^u$, invariant under the differential $\rmd g$ with uniform exponential contraction (expansion) on $E^s$ ($E^u$). For a flow $(\phi_t)_{t\in\R}$, generated by an ordinary differential equation $\dot{x} = f(x)$, a uniformly hyperbolic set is defined differently, because for any trajectory bounded away from equilibria, the vector $f(x) \in T_xM$ is neither contracted nor expanded exponentially. In this case, a uniformly hyperbolic set is a closed invariant set $\Lambda$ such that $T\Lambda = E^s \oplus E^c \oplus E^u$ with three invariant subbundles, where additionally to the contracting and expanding bundles the one-dimensional center bundle $E^c$ corresponds to the flow direction. Without the center bundle $E^c$ in this definition, a flow could only have trivial uniformly hyperbolic sets, consisting of finitely many equilibria.%

The situation looks different for systems generated by equations with explicitly time-dependent right-hand sides. General models for such systems are skew-products, which are dynamical systems of the form $\Phi:\T \tm B \tm M \rightarrow B \tm M$, $\Phi_t(b,x) = (\theta_tb,\varphi(t,x,b))$, with a time set $\T\in\{\Z,\R\}$. The solutions of the equation are incorporated in the map $\varphi$, while $\theta$ is a `driving system' on a base space $B$ that models the time-dependency of the equation. Every non-autonomous difference equation $x_{t+1} = f(t,x_t)$ or differential equation $\dot{x} = f(t,x)$ with unique and globally defined solutions gives rise to a skew-product, where $B = \T$ and $\theta_t(s) = t + s$. Other examples with less trivial base dynamics are random dynamical systems and control-affine systems. If $B$ is a compact space, $M$ a smooth manifold and $\Phi$ respects these structures, a uniformly hyperbolic set can be defined as a compact $\Phi$-invariant set $\Lambda \subset B \tm M$ such that for every $(b,x)\in\Lambda$ the tangent space $T_xM$ splits into subspaces $E^s_{b,x} \oplus E^u_{b,x}$ depending on $b$ and $x$. The invariance of the splitting now means that $\rmd\varphi_{t,b}(x)E^{s/u}_{b,x} = E^{s/u}_{\Phi_t(b,x)}$, and contraction (expansion) rates should be uniformly bounded in $b$ and $x$. One major difference to the autonomous situation is that there can exist non-trivial uniformly hyperbolic sets (whose projection to $M$ has nonempty interior) in the continuous-time case without the existence of a one-dimensional center subbundle. This, for instance, happens in random dynamical systems that arise as small time-dependent perturbations of a flow around a hyperbolic equilibrium (cf.~\cite{Liu} for the discrete-time case).%

In this paper, we consider a special type of skew-product flow, namely the control flow generated by a control-affine system, i.e., a control system governed by differential equations of the form%
\begin{equation*}
  \Sigma:\ \dot{x}(t) = f_0(x(t)) + \sum_{i=1}^mu_i(t)f_i(x(t)),\quad u \in \UC.%
\end{equation*}
The set $\UC$ of admissible control functions consists of all measurable $u:\R\rightarrow\R^m$ with values in a compact and convex set $U\subset\R^m$, and $f_0,f_1,\ldots,f_m$ are $\CC^1$-vector fields on a smooth manifold $M$. The set $\UC$, endowed with the weak$^*$-topology of $L^{\infty}(\R,\R^m) = L^1(\R,\R^m)^*$, is a compact metrizable space. For each $u\in\UC$ and $x\in M$ a unique solution to the corresponding equation exists with initial value $x$ at time $t=0$. Writing $\varphi(\cdot,x,u)$ for this solution and assuming that all such solutions exist on $\R$, one obtains a continuous skew-product flow%
\begin{equation*}
  \Phi:\R \tm \UC \tm M \rightarrow \UC \tm M,\quad \Phi_t(u,x) = (\theta_tu,\varphi(t,x,u)),%
\end{equation*}
where $\theta_tu(s) = u(t+s)$ is the shift flow on $\UC$. There are remarkable relations between dynamical properties of $\Phi$ and control-theoretic properties of $\Sigma$, a comprehensive study of which can be found in \cite{CKl}. In particular, the notions of control and chain control sets are to mention here. Control sets are the maximal subsets of $M$ on which complete approximate controllability holds. Their lifts to $\UC \tm M$ are maximal topologically transivite sets of $\Phi$. In contrast, chain control sets are the subsets of $M$ whose lifts are the maximal invariant chain transitive sets of $\Phi$, and they can be seen as an outer approximation of the control sets, since under mild assumptions a control set is contained in a chain control set.%

The purpose of this paper is to prove a theorem about the structure of a chain control set $Q$ with a uniformly hyperbolic structure without center bundle. We show that the lift of such $Q$, defined by%
\begin{equation*}
  \QC := \left\{(u,x) \in \UC \tm M\ :\ \varphi(\R,x,u) \subset Q\right\},%
\end{equation*}
has the property that each fiber $\{x\in M : (u,x) \in \QC\}$ is a singleton. In other words, $\QC$ is the graph of a (necessarily continuous) function $\UC \rightarrow Q$. This simple structure can be seen as an analogue to the fact that a connected uniformly hyperbolic set of a flow without center bundle consists of a single equilibrium. Nevertheless, from the control-theoretic viewpoint uniformly hyperbolic chain control sets are not trivial, since they can have nonempty interior and in this case are the closures of control sets (cf.~\cite{CDu,DK1}).%

The paper is organized as follows. In Section \ref{sec_mzh} we review the shadowing lemma proved in \cite{MZh} for uniformly hyperbolic sets of general skew-product maps. This is the main tool for the proof of our theorem, which is carried out in Section \ref{sec_mr}. The final Section \ref{sec_conseq} contains an application to invariance entropy.%

\section{A shadowing lemma for skew-product maps}\label{sec_mzh}%

In this section, we explain the contents of the shadowing lemma for skew-product maps proved in \cite{MZh} by Meyer and Zhang. Let $M$ be a Riemannian manifold (with metric $d(\cdot,\cdot)$) and $B$ a compact metric space. Suppose that%
\begin{equation*}
  \Phi:B \tm M \rightarrow B \tm M,\quad \Phi(b,x) = (\theta(b),\varphi(b,x)),%
\end{equation*}
is a homeomorphism such that also $\theta:B \rightarrow B$ is a homeomorphism.\footnote{In \cite{MZh}, $\theta$ is assumed to be almost periodic. However, this is not used for the proof of the shadowing lemma.} For fixed $b\in B$ assume that $\varphi_b := \varphi(b,\cdot):M \rightarrow M$ is a diffeomorphism whose derivative depends continuously on $(b,x)$. The orbit through $(b,x)$ is the set $\rmO(b,x) = \left\{\Phi^k(b,x) : k \in \Z\right\}$. We write $\varphi(k,x,b)$ for the second component of $\Phi^k(b,x)$, i.e., $\Phi^k(b,x) = (\theta^k(b),\varphi(k,x,b))$. A sequence $(b_k,x_k)_{k\in\Z}$ in $B \tm M$ is an $\alpha$-pseudo-orbit if%
\begin{equation*}
  b_{k+1} = \theta(b_k) \mbox{\quad and\quad} d(\varphi(b_k,x_k),x_{k+1}) < \alpha \mbox{\quad for all\ } k\in\Z.%
\end{equation*}
A pseudo-orbit $(b_k,x_k)_{k\in\Z}$ is $\beta$-shadowed by an orbit $\rmO(b,x)$ if%
\begin{equation*}
  b = b_0 \mbox{\quad and\quad} d(\varphi(k,x,b),x_k) < \beta \mbox{\quad for all\ } k\in\Z.%
\end{equation*}
A set $\Lambda \subset M \tm B$ is \emph{invariant} if $\Phi(\Lambda) = \Lambda$. A closed invariant set $\Lambda$ is \emph{isolated} if there exists a neighborhood $U$ of $\Lambda$ such that $\Phi^k(b,x) \in \cl U$ for all $k\in\Z$ implies $(b,x)\in\Lambda$. A closed invariant set $\Lambda$ is \emph{uniformly hyperbolic} if there are constants $C>0$, $0 < \mu < 1$ and a continuous map $(b,x) \mapsto P(b,x) \in P(T_xM,T_xM)$, defined on $\Lambda$, where $P(T_xM,T_xM)$ denotes the space of all linear projections on $T_xM$, such that%
\begin{enumerate}
\item[(i)] $P(\Phi(b,x))\rmd\varphi_b(x) = \rmd\varphi_b(x)P(b,x)$.%
\item[(ii)] $\|\rmd\varphi_{k,b}(x)P(b,x)\| \leq C\mu^k$ for all $(b,x)\in\Lambda$, $k\geq0$.%
\item[(iii)] $\|\rmd\varphi_{k,b}(x)(I-P(b,x))\| \leq C\mu^{-k}$ for all $(b,x)\in\Lambda$, $k\leq0$.%
\end{enumerate}
Here $\varphi_{k,b} = \varphi(k,\cdot,b)$. A reduced version of the shadowing lemma \cite[Lem.~2.11]{MZh} reads as follows.%

\begin{lemma}\label{lem_SL}
Let $\Lambda \subset B \tm M$ be a compact invariant uniformly hyperbolic set. Then there is a neighborhood $U$ of $\Lambda$ such that the following holds:%
\begin{enumerate}
\item[(i)] For any $\beta>0$ there is an $\alpha>0$ such that every $\alpha$-pseudo-orbit $(b_k,x_k)_{k\in\Z}$ in $U$ is $\beta$-shadowed by an orbit $\{\Phi^k(b_0,y) : k\in\Z\}$.%
\item[(ii)] There is $\beta_0>0$ such that $0 < \beta < \beta_0$ implies that the shadowing orbit in (i) is uniquely determined by the pseudo-orbit.%
\item[(iii)] If $\Lambda$ is an isolated invariant set of $\Phi$, then the shadowing orbit is in $\Lambda$.%
\end{enumerate}
\end{lemma}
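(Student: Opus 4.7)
The plan is to run the classical Perron argument, adapted to the skew-product setting. Observe first that a pseudo-orbit $(b_k,x_k)$ has its base component exact, $b_k = \theta^k(b_0)$, so the problem is to correct only the $M$-fibres. I would look for the shadowing orbit in the form $\varphi(k,y,b_0) = \exp_{x_k}(v_k)$ with $\sup_k |v_k|<\beta$, and turn the orbit equation into a fixed-point problem for the sequence $(v_k)$.

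Preparation: extend the projection-valued map $P$ continuously from $\Lambda$ to a neighborhood $V \subset B\tm M$ (Tietze on a local trivialization of $TM$ suffices), and shrink $V$ so that the derivatives $\rmd\varphi_b(x)$ still admit an exponential dichotomy with constants $(C',\mu')$ arbitrarily close to $(C,\mu)$. For any pseudo-orbit $(b_k,x_k)$ lying in $V$, the chain rule identity for $\varphi(k,y,b_0)$ expressed in the charts $\exp_{x_k}$ becomes
\begin{equation*}
v_{k+1} \;=\; A_k v_k + \delta_k + R_k(v_k),
\end{equation*}
where $A_k = \rmd\varphi_{b_k}(x_k)$ read through the exponential trivialization, $\delta_k := \exp_{x_{k+1}}^{-1}\varphi(b_k,x_k)$ satisfies $|\delta_k|<\alpha$, and $R_k$ is a quadratic remainder with $R_k(0)=0$ whose Lipschitz constant on $\{|v|\le\beta\}$ is $\rmO(\beta)$ uniformly in $k$ (by compactness of $\Lambda$ and continuity of $\rmd\varphi$).

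The core step is to invert the linear part. On the Banach space $X$ of bounded sequences $(v_k)$ with $v_k \in T_{x_k}M$ and sup norm, split $v_k = P_kv_k + (I-P_k)v_k$ using the extended projections. The dichotomy estimates (ii), (iii) let one solve $v_{k+1} - A_kv_k = \eta_k$ uniquely in $X$ by summing the stable component forward and the unstable component backward, giving a bounded inverse $L^{-1}$ with $\|L^{-1}\| \le K(C,\mu)$. The map $\Psi(v) := L^{-1}(\delta + R(v))$ then satisfies $\|\Psi(0)\| \le K\alpha$ and $\operatorname{Lip}(\Psi|_{\{|v|\le\beta\}}) \le K\cdot\rmO(\beta)$. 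Choosing $\beta_0$ so small that $K\cdot\rmO(\beta_0) < 1/2$ and then $\alpha < \beta/(2K)$, the Banach fixed point theorem produces a unique fixed point in the ball of radius $\beta$, and setting $y := \exp_{x_0}(v_0)$ yields the shadowing orbit. Uniqueness in this ball is part (ii) (with the universal threshold $\beta_0$). For (iii), the shadowing orbit lies within distance $\beta$ of the pseudo-orbit, hence inside $\cl U$ for $U$ an isolating neighborhood of $\Lambda$ chosen small compared to $V$ and $\beta$; isolation then forces $(b_0,y)\in\Lambda$.

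The main technical obstacle is making the linear-algebra part of the argument genuinely uniform along a pseudo-orbit whose points need not lie in $\Lambda$. This is what forces the continuous extension of $P$ and the careful choice of $V$: one needs both the dichotomy constants and the quadratic-remainder estimates to be controlled by a single modulus of continuity inherited from the compactness of $\Lambda$ and the continuity of $(b,x)\mapsto(P(b,x),\rmd\varphi_b(x))$. Everything else reduces to the standard contraction-mapping template.
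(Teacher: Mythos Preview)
The paper does not prove this lemma; it is quoted as a reduced version of \cite[Lem.~2.11]{MZh} and used as a black box. Your outline is the standard Perron/contraction-mapping proof and is essentially what Meyer and Zhang carry out in \cite{MZh}: trivialize via exponential charts along the pseudo-orbit, use the exponential dichotomy to invert the linear difference operator on the space of bounded sequences, and close up with the Banach fixed-point theorem. So your approach matches the source the paper defers to.

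Two small points worth tightening. First, ``Tietze on a local trivialization'' does not literally produce a projection-valued extension of $P$; what one actually needs is only that the extended stable and unstable subbundles (or, equivalently, a continuous extension of $P$ that is merely close to a projection) still yield dichotomy estimates with constants $(C',\mu')$ near $(C,\mu)$ on a sufficiently thin neighborhood, and this follows from continuity and compactness without requiring the extension to be an exact projection. Second, in part (iii) the quantifier order matters: you must first fix an isolating neighborhood $N$ of $\Lambda$, then take the shadowing neighborhood $U$ and $\beta_0$ small enough that the $\beta$-tube around any pseudo-orbit in $U$ stays inside $\cl N$; your phrase ``$U$ \ldots\ chosen small compared to $V$ and $\beta$'' has this slightly backwards, but the intended argument is correct.
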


\section{The main result}\label{sec_mr}%

\subsection{Preliminaries and assumptions}%

Consider a control-affine system%
\begin{equation*}
  \Sigma:\ \dot{x}(t) = f_0(x(t)) + \sum_{i=1}^m u_i(t)f_i(x(t)),\quad u\in\UC = L^{\infty}(\R,U),%
\end{equation*}
on a compact Riemannian manifold $M$ with distance $d(\cdot,\cdot)$. The vector fields $f_0,f_1,\ldots,f_m$ are assumed to be of class $\CC^1$ and the control range $U\subset\R^m$ is compact and convex. The set $\UC$ of admissible control functions is endowed with the weak$^*$-topology of $L^{\infty}(\R,\R^m) = L^1(\R,\R^m)^*$. We write%
\begin{equation*}
  \Phi:\R \tm \UC \tm M \rightarrow \UC \tm M,\quad \Phi_t(u,x) = (\theta_tu,\varphi(t,x,u)),%
\end{equation*}
for the associated control flow and $\varphi_{t,u} = \varphi(t,\cdot,u)$. A chain control set is a set $Q \subset M$ with the following properties:%
\begin{enumerate}
\item[(i)] For every $x\in Q$ there exists $u\in\UC$ with $\varphi(\R,x,u) \subset Q$.%
\item[(ii)] For each two $x,y\in Q$ and all $\ep,T>0$ there are $n\in\N$, controls $u_0,\ldots,u_{n-1}\in\UC$, states $x_0 = x,x_1,\ldots,x_{n-1},x_n = y$ and times $t_0,\ldots,t_{n-1}\geq T$ such that%
\begin{equation*}
  d(\varphi(t_i,x_i,u_i),x_{i+1}) < \ep,\quad i = 0,1,\ldots,n-1.%
\end{equation*}
\item[(iii)] $Q$ is maximal with (i) and (ii) in the sense of set inclusion.%
\end{enumerate}

Throughout the paper, we fix a chain control set $Q$ and write%
\begin{equation*}
  \QC = \left\{ (u,x) \in \UC\tm M\ :\ \varphi(\R,x,u) \subset Q \right\}%
\end{equation*}
for its \emph{full-time lift}, which is a chain recurrent component of the control flow $\Phi$ on $\UC\tm M$ (cf.~\cite[Thm.~4.1.4]{CKl}). We further assume that $\QC$ is an isolated invariant set for $\Phi$, i.e., there exists a neighborhood $N \subset \UC \tm M$ of $\QC$ such that $\Phi(\R,u,x) \subset N$ implies $(u,x) \in \QC$. This, e.g., is the case if there are only finitely many chain control sets on $M$, because then the chain recurrent components are the elements of a finest Morse decomposition. We further assume that%
\begin{equation*}
  T_xM = E^+_{u,x} \oplus E^-_{u,x},\quad \forall (u,x)\in\QC,%
\end{equation*}
with subspaces $E^{\pm}_{u,x}$ satisfying%
\begin{enumerate}
\item[(H1)] $\rmd\varphi_{t,u}(x)E^{\pm}_{u,x} = E^{\pm}_{\Phi_t(u,x)}$ for all $(u,x) \in \QC$ and $t\in\R$.%
\item[(H2)] There exist constants $0 < c \leq 1$ and $\lambda>0$ such that for all $(u,x)\in\QC$,%
\begin{equation*}
  \left|\rmd\varphi_{t,u}(x)v\right| \leq c^{-1}\rme^{-\lambda t}|v| \mbox{\quad for all\ } t\geq0,\ v\in E^-_{u,x},%
\end{equation*}
and%
\begin{equation*}
  \left|\rmd\varphi_{t,u}(x)v\right| \geq c\rme^{\lambda t}|v| \mbox{\quad for all\ } t\geq0,\ v\in E^+_{u,x}.%
\end{equation*}
\end{enumerate}
From (H1) and (H2) it follows that $E^{\pm}_{u,x}$ depend continuously on $(u,x)$ (cf.~\cite[Lem.~6.4]{Kaw}). We define the $u$-fiber of $\QC$ by%
\begin{equation*}
  Q(u) := \left\{ x\in Q\ :\ (u,x)\in\QC \right\}.%
\end{equation*}
On $\UC$ we fix a metric, compatible with the weak$^*$-topology, of the form%
\begin{equation*}
  d_{\UC}(u,v) = \sum_{n=1}^{\infty}\frac{1}{2^n}\frac{|\int_{\R}\langle u(t)-v(t),x_n(t)\rangle\rmd t|}{1 + |\int_{\R}\langle u(t)-v(t),x_n(t)\rangle\rmd t|},%
\end{equation*}
where $\{x_n : n\in\N\}$ is a dense and countable subset of $L^1(\R,\R^m)$ and $\langle\cdot,\cdot\rangle$ is a fixed inner product on $\R^m$ (cf.~\cite[Lem.~4.2.1]{CKl}).%

\subsection{Statement of results and proofs}%

We observe that the time-$1$-map $\Phi_1:\UC \tm M \rightarrow \UC \tm M$, $(u,x) \mapsto (\theta_1u,\varphi(1,x,u))$, of the control flow is a skew-product map and $\QC$ is a uniformly hyperbolic set for $\Phi_1$ in the sense of Section \ref{sec_mzh}. Moreover, $\QC$ is isolated for $\Phi_1$, which easily follows from our assumption that $\QC$ is an isolated invariant set of the control flow. Continuous dependence of the derivative $\rmd\varphi_{1,u}(x)$ on $(u,x)$ is proved in \cite[Thm.~1.1]{Kaw}.%

\begin{proposition}
For any $u,v \in \UC$ the fibers $Q(u)$ and $Q(v)$ are homeomorphic.%
\end{proposition}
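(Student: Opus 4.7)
The plan is to construct, for every pair $u,v \in \UC$, a homeomorphism $h_{uv}\colon Q(u)\to Q(v)$ using Lemma~\ref{lem_SL} applied to the time-one map $\Phi_1$, which (as noted just before the proposition) has $\QC$ as an isolated uniformly hyperbolic invariant set in the sense of Section~\ref{sec_mzh}.

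I would first handle the case of ``shift-uniformly close'' controls. Fix $\beta \in (0,\beta_0)$ with corresponding $\alpha > 0$ and neighborhood $U$ of $\QC$ from Lemma~\ref{lem_SL}. By uniform continuity of $(u,x)\mapsto\varphi(1,x,u)$ on the compact space $\UC\tm M$, there exists $\delta>0$ such that the open $\delta$-neighborhood of $\QC$ in $\UC\tm M$ lies inside $U$, and $d_{\UC}(u',v')<\delta$ implies $d(\varphi(1,y,u'),\varphi(1,y,v'))<\alpha$ for every $y\in M$. Whenever $u,v\in\UC$ satisfy $\sup_{k\in\Z}d_{\UC}(\theta_ku,\theta_kv)<\delta$, then for any $x\in Q(u)$ the sequence $(\theta_kv,\varphi(k,x,u))_{k\in\Z}$ is an $\alpha$-pseudo-orbit of $\Phi_1$ inside $U$. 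Lemma~\ref{lem_SL} then yields a unique $y\in M$ whose $\Phi_1$-orbit through $(v,y)$ $\beta$-shadows this pseudo-orbit, and by part~(iii) this orbit lies in $\QC$, so $y\in Q(v)$; set $h_{uv}(x):=y$. Defining $h_{vu}$ symmetrically, the shadowing inequality $d(\varphi(k,x,u),\varphi(k,y,v))<\beta$ is symmetric in its two orbits, so $x$ is a valid shadow of the pseudo-orbit that defines $h_{vu}(y)$, and the uniqueness clause forces $h_{vu}(h_{uv}(x))=x$. Continuity of both maps follows from continuous dependence of the shadowing orbit on the pseudo-orbit.

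To extend to arbitrary $u,v$, I would connect them by the weak-$*$-continuous straight-line path $c(s)=(1-s)u+sv$ (well-defined by convexity of $U$), and use compactness of $c([0,1])$ to subdivide $[0,1]$ finely enough that the construction above joins consecutive subdivision points; the composition yields $h_{uv}$. The main obstacle here is that weak-$*$ closeness does \emph{not} entail shift-uniform closeness: the shifts $\theta_k\colon\UC\to\UC$ are individually weak-$*$-continuous but their family is not equicontinuous in $k$, so bounding $d_{\UC}(u',v')$ gives no bound on $\sup_kd_{\UC}(\theta_ku',\theta_kv')$. A plausible remedy is a truncated-shadowing argument: for each $N\in\N$ apply the shadowing lemma only to the finite pseudo-orbit of length $2N+1$ (where weak-$*$ convergence of controls does yield uniform convergence of trajectories on $[-N,N]$), extract the resulting point in $M$, and pass to a limit as $N\to\infty$ using compactness of $\QC$ and hyperbolicity to obtain uniqueness in the limit. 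Verifying that this limit procedure produces a well-defined continuous bijection is the technical heart of the proof and where I would expect to spend the bulk of the work.
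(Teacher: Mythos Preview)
Your shadowing step and the idea of connecting $u$ and $v$ by the straight-line path $c(s)=(1-s)u+sv$ are exactly what the paper does. The gap is in how you handle the obstacle you correctly identified: weak-$*$ closeness along the path does not give shift-uniform closeness. The paper's resolution is a single elementary observation that you are missing, and once you see it the truncated-shadowing workaround becomes unnecessary.

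The point is that the path $s\mapsto c(s)$ is continuous not merely in the weak-$*$ topology but in the much stronger $L^\infty$-topology: one has $\|c(s_1)-c(s_2)\|_\infty \le |s_1-s_2|\cdot\diam U$. And $L^\infty$-closeness \emph{does} imply shift-uniform weak-$*$ closeness, because the metric $d_{\UC}$ is dominated, summand by summand, by $\|u-v\|_\infty\cdot\|x_n\|_1$, a bound that is invariant under the shift. Concretely, for every $\ep>0$ there is $\delta>0$ such that $\|u'-v'\|_\infty<\delta$ implies $\sup_{t\in\R}d_{\UC}(\theta_tu',\theta_tv')<\ep$. Hence one may subdivide $[0,1]$ so finely that consecutive points $c(s_i),c(s_{i+1})$ are $L^\infty$-close, hence shift-uniformly weak-$*$ close, and then your first step applies directly to each consecutive pair. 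Your proposed remedy via finite pseudo-orbits and a limiting procedure is not needed, and would in any case require a version of Lemma~\ref{lem_SL} for finite pseudo-orbits that is not provided.
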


\begin{proof}
The proof is subdivided into three steps.%

\emph{Step 1.} We claim that for every $\ep>0$ there is $\delta>0$ such that for all $u,v\in\UC$,%
\begin{equation}\label{eq1}
  \|u - v\|_{\infty} < \delta \quad\Rightarrow\quad d_{\UC}(\theta_tu,\theta_tv) < \ep \mbox{\quad for all\ } t\in\R,%
\end{equation}
where $\|\cdot\|_{\infty}$ is the $L^{\infty}$-norm. To prove this, choose $N = N(\ep) \in \N$ with%
\begin{equation*}
  \sum_{n=N+1}^{\infty}\frac{1}{2^n} < \frac{\ep}{2}.%
\end{equation*}
Then put%
\begin{equation*}
  c(\ep) := \max_{1 \leq n \leq N}\|x_n\|_1,\quad \delta := \frac{\ep}{2c(\ep)},%
\end{equation*}
where $\|\cdot\|_1$ is the $L^1$-norm. Then, for every $t\in\R$, $\|u-v\|_{\infty}<\delta$ implies%
\begin{eqnarray*}
  d_{\UC}(\theta_tu,\theta_tv) &=& \sum_{n=1}^{\infty}\frac{1}{2^n}\frac{|\int_{\R}\langle u(t+s)-v(t+s),x_n(s) \rangle \rmd s|}{1 + |\int_{\R}\langle u(t+s)-v(t+s),x_n(s) \rangle \rmd s|}\\
	                             &<& \sum_{n=1}^N\frac{1}{2^n}\left|\int_{\R}\langle u(t+s)-v(t+s),x_n(s) \rangle \rmd s\right| + \frac{\ep}{2}\\
															 &\leq& \sum_{n=1}^N\frac{1}{2^n}\int_{\R} |u(t+s)-v(t+s)|\cdot |x_n(s)|\rmd s + \frac{\ep}{2}\\
															 &\leq& \delta \sum_{n=1}^N\frac{1}{2^n} \max_{1\leq n\leq N}\int_{\R}|x_n(s)|\rmd s + \frac{\ep}{2} < \frac{\ep}{2 c(\ep)}c(\ep) + \frac{\ep}{2} = \ep.%
\end{eqnarray*}

\emph{Step 2.} Consider the time-$1$-map $\Phi_1$ of the control flow with the uniformly hyperbolic set $\QC$. Let $\beta>0$ be given and choose $\alpha = \alpha(\beta)$ according to the shadowing lemma \ref{lem_SL}. Then choose $\ep = \ep(\alpha)$ such that%
\begin{equation}\label{eq2}
  d_{\UC}(u,v) < \ep \quad\Rightarrow\quad d(\varphi(1,x,u),\varphi(1,x,v)) < \alpha,%
\end{equation}
whenever $u,v\in\UC$ and $x\in Q$. This is possible by uniform continuity of $\varphi(1,\cdot,\cdot)$ on the compact set $Q \tm \UC$. We claim that for all sufficiently small $\beta$,%
\begin{equation}\label{eq3}
  \sup_{t\in\R}d_{\UC}(\theta_tu,\theta_tv) < \ep \quad\Rightarrow\quad Q(u) \mbox{ and } Q(v) \mbox{ are homeomorphic}.%
\end{equation}
If $Q(u)$ and $Q(v)$ are both empty, there is nothing to show. Otherwise, we may assume that $Q(u) \neq \emptyset$. Then choose $x\in Q(u)$ arbitrarily and consider the doubly infinite sequence $x_n := \varphi(n,x,u)$, $n\in\Z$, which is completely contained in $Q$ and (by \eqref{eq2}) satisfies%
\begin{equation*}
  d(\varphi(1,x_n,\theta_nv),x_{n+1}) = d(\varphi(1,x_n,\theta_nv),\varphi(1,x_n,\theta_nu)) < \alpha%
\end{equation*}
for all $n\in\Z$. Hence, $(x_n,\theta_nv)_{n\in\Z}$ is an $\alpha$-pseudo-orbit for $\Phi_1$. By the shadowing lemma there exists $y\in M$ with%
\begin{equation*}
  d(\varphi(n,y,v),\varphi(n,x,u)) < \beta \mbox{\quad for all\ } n\in\Z.%
\end{equation*}
Since $\QC$ is isolated, Lemma \ref{lem_SL}(iii) implies $(v,y) \in \QC$, i.e., $y \in Q(v)$. We claim that the map%
\begin{equation*}
  h_{uv}:Q(u) \rightarrow Q(v),\quad x \mapsto y,%
\end{equation*}
defined in this way, is a homeomorphism. If $\beta$ is small enough, the shadowing orbit is unique by Lemma \ref{lem_SL}(ii), and hence $h_{uv}$ is uniquely defined. Since the $\beta$-shadowing relation between $u$-orbits and $v$-orbits is symmetric, one can equivalently define a map $h_{vu}:Q(v) \rightarrow Q(u)$, which by uniqueness must be the inverse of $h_{uv}$. The proof for the continuity of $h_{uv}$ is standard and will be omitted. (Continuity will also follow trivially from Proposition \ref{prop_finitefiber}).%

\emph{Step 3.} From convexity of $U$ it follows that for arbitrary $u,v\in\UC$ the curve%
\begin{equation*}
  w:[0,1] \rightarrow \UC,\quad \tau \mapsto w_{\tau},\quad w_{\tau}(t) :\equiv (1-\tau)u(t) + \tau v(t),%
\end{equation*}
is well-defined. It is continuous w.r.t.~the $L^{\infty}$-topology on $\UC$, since%
\begin{eqnarray*}
  \|w_{\tau_1} - w_{\tau_2}\|_{\infty} &=& \ess_{t\in\R}\left|(1-\tau_1)u(t) + \tau_1v(t) - (1-\tau_2)u(t) - \tau_2v(t)\right|\\
	                                     &=& \ess_{t\in\R}\left|(\tau_2-\tau_1)u(t) - (\tau_2 - \tau_1)v(t)\right|\\
																			 &=& \ess_{t\in\R} |\tau_2-\tau_1| \cdot |u(t)-v(t)| \leq |\tau_2-\tau_1|\diam U.%
\end{eqnarray*}
Hence, for each $\tau\in[0,1]$ we can pick a (relatively) open subinterval $I_{\tau} \subset [0,1]$ containing $\tau$ such that $\|w_s - w_r\|_{\infty}$ is smaller than a given constant for all $s,r\in I_{\tau}$. By Step 1 this implies that $\sup_{t\in\R}d_{\UC}(\theta_t w_s,\theta_t w_r) < \ep$ for all $s,r\in I_{\tau}$ with a given $\ep>0$. Choose $\ep$ according to Step 2 so that $Q(w_s)$ and $Q(w_r)$ are homeomorphic for any two $s,r\in I_{\tau}$. By compactness, finitely many such intervals $I_{\tau_1},\ldots,I_{\tau_l}$ are sufficient to cover $[0,1]$. We may assume that $0 = \inf I_{\tau_1} < \inf I_{\tau_2} < \cdots < \inf I_{\tau_l} < \sup I_{\tau_l} = 1$. To show that $Q(u)$ and $Q(v)$ are homeomorphic, we put $t_0 := 0$, $t_l := 1$ and pick $t_i \in I_{\tau_i} \cap I_{\tau_{i+1}} \neq \emptyset$ for $i = 1,\ldots,l-1$. Then there exist homeomorphisms $h_{i,i+1}:Q(w_{t_i}) \rightarrow Q(w_{t_{i+1}})$ for $0 \leq i \leq l-1$. The composition of these homeomorphisms gives a homeomorphism from $Q(u) = Q(w_{t_0})$ to $Q(v) = Q(w_{t_l})$.%
\end{proof}

\begin{corollary}\label{cor_ops}
If $Q(u)$ is a singleton for one $u\in\UC$, then $\QC$ is the graph of a continuous map from $\UC$ to $Q$.%
\end{corollary}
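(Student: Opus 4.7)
The plan is to combine the preceding Proposition with a standard closed-graph continuity argument, since the substantive content (that all fibers have the same cardinality) has already been established.

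First I would observe that if $Q(u_0)$ is a singleton for some $u_0 \in \UC$, then by the Proposition every other fiber $Q(v)$ is homeomorphic to $Q(u_0)$ and therefore also a singleton. This defines a map $\pi:\UC\rightarrow Q$ sending $v$ to the unique point of $Q(v)$, and by construction the graph of $\pi$ is exactly $\QC$.

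Next I would verify that $\QC$ is a closed subset of the compact metric space $\UC\tm M$: if $(u_n,x_n)\in\QC$ with $(u_n,x_n)\rightarrow(u,x)$, then for every $t\in\R$ the joint continuity of the control flow yields $\varphi(t,x_n,u_n)\rightarrow\varphi(t,x,u)$, and since each $\varphi(t,x_n,u_n)\in Q$ and $Q$ is closed, we obtain $\varphi(\R,x,u)\subset Q$, so $(u,x)\in\QC$. To deduce continuity of $\pi$, let $u_n\rightarrow u$ in $\UC$ and consider any subsequence $\pi(u_{n_k})$; by compactness of $M$ a further subsequence converges to some $y\in M$, and closedness of $\QC$ together with the graph property forces $(u,y)\in\QC$, hence $y=\pi(u)$. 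Since every subsequence of $\pi(u_n)$ has a further subsequence converging to $\pi(u)$, we conclude $\pi(u_n)\rightarrow\pi(u)$.

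I do not anticipate a real obstacle: the Proposition does all the heavy lifting, and what remains is essentially the standard fact that a function into a compact Hausdorff space with closed graph is continuous. The only point requiring a brief verification is the closedness of $\QC$, and this is a direct consequence of continuity of $\varphi$ and closedness of $Q$.
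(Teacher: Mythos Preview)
Your proof is correct and follows essentially the same approach as the paper: use the Proposition to see that every fiber is a singleton, and then derive continuity of the resulting map from compactness. The paper phrases the continuity argument as ``the projection $\QC\to\UC$ is a continuous bijection between compact metric spaces, hence a homeomorphism,'' which is just the other side of your closed-graph argument; your explicit verification that $\QC$ is closed is exactly what justifies the compactness of $\QC$ that the paper takes for granted.
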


\begin{proof}
By the proposition, $Q(u)$ is a singleton for every $u\in\UC$, say $Q(u) = \{x(u)\}$. Consider the map $u \mapsto (u,x(u))$, $\UC\rightarrow\QC$. This is an invertible map between compact metric spaces with (obviously) continuous inverse. Hence, it is a homeomorphism. This implies that the function $u \mapsto x(u)$, $\UC\rightarrow Q$, is continuous and $\QC$ is its graph.%
\end{proof}

The next proposition shows that the fibers $Q(u)$ are finite.%

\begin{proposition}\label{prop_finitefiber}
If $u$ is a constant control function, then $Q(u)$ consists of finitely many equilibria. Hence, there exists $n\in\N$ such that $Q(u)$ has precisely $n$ elements for every $u\in\UC$.%
\end{proposition}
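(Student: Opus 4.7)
The plan is to first handle the case where $u \equiv u_0 \in U$ is a constant control by showing that every $x \in Q(u)$ is an equilibrium of the autonomous vector field $g_{u_0} := f_0 + \sum_{i=1}^m u_{0,i} f_i$. Finiteness of $Q(u)$ will then follow because the hyperbolic splitting forces each such equilibrium to be isolated. For the second clause, Proposition 3.1 supplies homeomorphisms $Q(v) \simeq Q(u)$ for every $v\in\UC$, and since a finite set is characterized topologically by its cardinality, a single $n$ works for all $v$.

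For the equilibrium step, I would fix a constant $u\equiv u_0$ and $x\in Q(u)$. The standard identity for autonomous flows gives $\rmd\varphi_{t,u}(x) g_{u_0}(x) = g_{u_0}(\varphi(t,x,u))$ for all $t\in\R$, so this curve of tangent vectors is bounded, being the image of a curve in the compact set $Q$ under the continuous map $g_{u_0}$. I decompose $g_{u_0}(x) = v^+ + v^-$ with $v^\pm \in E^\pm_{u,x}$. By (H1) this decomposition is carried to $E^\pm_{\Phi_t(u,x)}$ by $\rmd\varphi_{t,u}(x)$. Since the splitting $E^+\oplus E^-$ depends continuously on $(u,x)$ on the compact set $\QC$, the angle between $E^+$ and $E^-$ is uniformly bounded below, yielding a constant $\kappa>0$ with $|w^+ + w^-|\geq \kappa\max(|w^+|,|w^-|)$ at every point of $\QC$. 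Applied forward in time, (H2) gives $|\rmd\varphi_{t,u}(x)v^+|\geq c\rme^{\lambda t}|v^+|$ for $t\geq 0$, which combined with the angle bound would force $g_{u_0}$ to become unbounded along the orbit unless $v^+=0$. Reading (H2) backward shows that $E^-$ is expanded exponentially in reverse time, so by the same reasoning $v^-=0$. Hence $g_{u_0}(x)=0$.

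At such an equilibrium $x^*$ the flow reduces to $\rmd\varphi_{t,u}(x^*) = \rme^{t Dg_{u_0}(x^*)}$, and the hyperbolic splitting forces $Dg_{u_0}(x^*)$ to have no purely imaginary eigenvalues; in particular it is invertible. The inverse function theorem then makes $x^*$ an isolated zero of $g_{u_0}$, so $Q(u)$ is a discrete closed subset of the compact set $Q$, hence finite. Invoking Proposition 3.1 with this constant $u$ and arbitrary $v$ transfers finiteness to $Q(v)$ with matching cardinality, producing the uniform $n$. The hard part will be the angle-bound step: one has to combine the uniform continuity of the splitting on the compact set $\QC$ with the hyperbolic estimates in \emph{both} time directions in order to kill both the $E^+$ and the $E^-$ component of $g_{u_0}(x)$; using only forward-time expansion would leave a possible nontrivial stable component.
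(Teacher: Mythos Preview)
Your argument is correct and in fact more direct than the paper's. Both proofs agree on the peripheral steps---finiteness via isolation of hyperbolic equilibria, and the transfer to arbitrary $v\in\UC$ via the earlier proposition---but they diverge on the core claim that every $x\in Q(u)$ is an equilibrium of $g_{u_0}$. You exploit the two-sided boundedness of $t\mapsto g_{u_0}(\varphi(t,x,u))$ together with the uniform angle bound on $\QC$ to kill the $E^+$ component in forward time and the $E^-$ component in backward time, yielding $g_{u_0}(x)=0$ immediately. The paper instead uses only forward-time boundedness to conclude $g_{u_0}(x)\in E^-_{u,x}$, which gives $|g_{u_0}(\varphi(t,x,u))|\to 0$ and hence forward (and by symmetry backward) convergence of the orbit to equilibria $z_{\pm}$; it then invokes expansivity of the time-$1$ map on the hyperbolic set $Q(u)$ and a separate $\ep$--$\delta$--$\tau$ argument to show that $x$ and a small time-shift $\varphi(\tau,x,u)$ have $\ep$-close $\Phi_1$-orbits, forcing $x=\varphi(\tau,x,u)=z_+=z_-$. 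Your route avoids the expansivity citation and the asymptotic-convergence step entirely, at the cost of making the uniform transversality of the splitting explicit; the paper's route is closer in spirit to classical hyperbolic dynamics but is noticeably longer.
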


\begin{proof}
Let $u\in\UC$ be a constant control function. Observe that $Q(u)$ is a uniformly hyperbolic set for the diffeomorphism $g := \varphi_{1,u}:M\rightarrow M$. It is well-known that a diffeomorphism is expansive on a uniformly hyperbolic set, i.e., there is $\ep>0$ such that $d(g^k(x),g^k(y)) < \ep$ for all $k\in\Z$ and $x,y\in Q(u)$ implies $x=y$ (see \cite[Cor.~6.4.10]{KHa}). If $x\in Q(u)$ and $w := f_0(x) + \sum_{i=1}^m u_i f_i(x) \neq 0$, then the Lyapunov exponent $l(w) := \limsup_{t\rightarrow\infty}(1/t)\log|\rmd\varphi_{t,u}(x)w|$ vanishes if the trajectory $\varphi(t,x,u)$ is bounded away from equilibria. A zero Lyapunov exponent, however, contradicts the existence of the uniformly hyperbolic splitting on $\QC$. Since the right-hand side of the system is bounded on compact sets, $l(w)<0$ follows, implying $w \in E^-_{u,x}$. Writing $f := f_0 + \sum_{i=1}^m u_i f_i$, this yields $|\rmd\varphi_{t,u}(x)w| = |f(\varphi(t,x,u))| \leq c^{-1}\rme^{-\lambda t}$ for $t\geq0$. Because of the uniform hyperbolicity, there can be at most finitely many equilibria in the compact set $Q(u)$, and hence $\varphi(t,x,u) \rightarrow z_+$ for some equilibrium $z_+$. The same argumentation for the backward flow yields $\varphi(t,x,u) \rightarrow z_-$ for an equilibrium $z_-$. Choose $t_0>0$ large enough so that%
\begin{equation}\label{eq_t0choice}
  d(\varphi(t,x,u),z_{\pm}) < \frac{\ep}{2} \mbox{\quad if\ } |t| \geq \frac{t_0}{2}.%
\end{equation}
Then choose $\delta>0$ small enough so that%
\begin{equation}\label{eq_deltachoice}
  d(x,y) < \delta \quad\Rightarrow\quad d(\varphi(t,x,u),\varphi(t,y,u)) < \ep \mbox{\quad for all\ } |t| \leq t_0.%
\end{equation}
Finally, let $\tau \in (0,t_0/2)$ be chosen so that%
\begin{equation}\label{eq_tauchoice}
  d(x,\varphi(\tau,x,u)) < \delta.%
\end{equation}
We let $y := \varphi(\tau,x,u)$ and claim that $d(\varphi(t,x,u),\varphi(t,y,u)) < \ep$ for all $t\in\R$, implying $x = y$. Indeed, by \eqref{eq_tauchoice} and \eqref{eq_deltachoice} we have%
\begin{equation*}
  d(\varphi(t,x,u),\varphi(t,y,u)) < \ep \mbox{\quad for all\ } |t| \leq t_0.%
\end{equation*}
Now assume that $t \geq t_0$. Then \eqref{eq_t0choice} yields%
\begin{equation*}
  d(\varphi(t,x,u),\varphi(t,y,u)) \leq d(\varphi(t,x,u),z_+) + d(z_+,\varphi(t+\tau,x,u)) < \ep.%
\end{equation*}
If $t < -t_0$, we obtain $t + \tau < -t_0 + \tau < -t_0 + t_0/2 = -t_0/2$, and hence \eqref{eq_t0choice} gives%
\begin{equation*}
  d(\varphi(t,x,u),\varphi(t,y,u)) \leq d(\varphi(t,x,u),z_-) + d(z_-,\varphi(t+\tau,x,u)) < \ep.%
\end{equation*}
In particular, $d(g^k(x),g^k(y)) < \ep$ for all $k\in\Z$, and hence $x=y=z_+=z_-$. Consequently, $Q(u)$ consists of finitely many equilibria.%
\end{proof}

The next theorem is our main result.%

\begin{theorem}\label{thm_mainres}
Consider the control-affine system $\Sigma$ with the uniformly hyperbolic chain control set $Q$ with isolated lift $\QC$. Assume that $\inner U \neq \emptyset$ and let $u_0$ be a constant control function with value in $\inner U$. Additionally suppose that the following hypotheses are satisfied:%
\begin{enumerate}
\item[(i)] The vector fields $f_0,f_1,\ldots,f_m$ are of class $\CC^{\infty}$ and the Lie algebra generated by them has full rank at each point of $Q$.%
\item[(ii)] For each $x\in Q(u_0)$ and each $\rho \in (0,1]$ it holds that $x \in \inner\OC^+_{\rho}(x)$, where $\OC^+_{\rho}(x) = \{\varphi(t,x,u) : t\geq 0,\ u\in\UC^{\rho}\}$ with
\begin{equation*}
  \UC^{\rho} = \{u \in \UC\ :\ u(t) \in u_0 + \rho(U - u_0) \mbox{ a.e.}\}.%
\end{equation*}
\end{enumerate}
Then $\QC$ is a graph of a continuous function $\UC \rightarrow Q$.%
\end{theorem}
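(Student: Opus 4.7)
The plan is to reduce the theorem, via Corollary~\ref{cor_ops}, to proving that $Q(u_0)$ is a singleton, and then to exclude the possibility $|Q(u_0)|=n\geq 2$ by producing an orbit in $\QC$ that is incompatible with the sheet structure inherited from the preceding proposition. Proposition~\ref{prop_finitefiber} already tells us $Q(u_0)=\{z_1,\ldots,z_n\}$ is a finite set of equilibria of the vector field $f_0+\sum (u_0)_if_i$, so the task is to rule out $n\geq 2$.

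First I would use hypotheses (i) and (ii) to obtain a controlled trajectory inside $Q$ connecting $z_1$ and $z_2$. The Lie-algebra rank condition combined with the local openness $z_j\in\inner\OC^+(z_j)$ (hypothesis (ii) at $\rho=1$) places each $z_j$ in the interior of a control set by standard geometric control theory (cf.~\cite{CKl}). Because $Q$ is a uniformly hyperbolic chain control set without centre bundle which then has nonempty interior, the results of \cite{CDu,DK1} yield $Q=\cl D$ for a single control set $D$ with $z_1,\ldots,z_n\in\inner D$. Exact controllability in $\inner D$ gives $u^*\in\UC$ and $T^*>0$ with $\varphi(T^*,z_1,u^*)=z_2$ along a trajectory that remains in $Q$. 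Splicing, I define a hybrid $\hat u\in\UC$ equal to $u^*$ on $[0,T^*]$ and to $u_0$ elsewhere. The trajectory from $z_1$ under $\hat u$ sits at $z_1$ on $(-\infty,0]$, traverses from $z_1$ to $z_2$ over $[0,T^*]$, and sits at $z_2$ on $[T^*,\infty)$, so lies in $Q$ and $(\hat u,z_1)\in\QC$. Because $\hat u-u_0$ has compact support, $\theta_t\hat u\to u_0$ in the weak$^*$ topology as $t\to\pm\infty$, giving $\Phi_t(\hat u,z_1)\to(u_0,z_1)$ as $t\to-\infty$ and $\Phi_t(\hat u,z_1)\to(u_0,z_2)$ as $t\to+\infty$.

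Second, I would apply the preceding proposition to obtain a sheet structure on $\QC$. Extending the shadowing homeomorphism $h_{uv}$ along the straight line $\tau\mapsto(1-\tau)u_0+\tau u$ in $(\UC,\|\cdot\|_\infty)$, and using $L^\infty$-contractibility of the convex set $\UC$, I get continuous (in $\|\cdot\|_\infty$) selections $x_j:\UC\to Q$ with $x_j(u_0)=z_j$ whose graphs $S_j$ partition $\QC$ set-theoretically. The index $\sigma_t(u,j)\in\{1,\ldots,n\}$ defined by $\Phi_t(u,x_j(u))=(\theta_tu,x_{\sigma_t(u,j)}(\theta_tu))$ is continuous in $(t,u)\in\R\times(\UC,\|\cdot\|_\infty)$ and takes discrete values, so by connectedness of both factors it is constant; evaluation at $t=0$ yields $\sigma_t(u,j)\equiv j$, and each $S_j$ is $\Phi$-invariant. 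Hence the orbit of $(\hat u,z_1)$ lies in a single sheet $S_{j_\star}$, and the identity $x_{j_\star}(\theta_t\hat u)=\varphi(t,z_1,\hat u)$ holds for all $t\in\R$; in particular $x_{j_\star}(\theta_t\hat u)=z_1$ for $t\leq 0$ and $x_{j_\star}(\theta_t\hat u)=z_2$ for $t\geq T^*$.

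The contradiction is then extracted by passing to the limit $\theta_t\hat u\to u_0$ in those two identities as $t\to\mp\infty$; if $x_{j_\star}$ is continuous at $u_0$ with respect to the weak$^*$ topology, both limits equal $x_{j_\star}(u_0)=z_{j_\star}$, forcing $z_1=z_2$. The hard part, and the main obstacle of the plan, is precisely this weak$^*$ continuity at $u_0$: the sheet construction gives only $\|\cdot\|_\infty$-continuity of the $x_j$, whereas the shifts $\theta_t\hat u$ approach $u_0$ only in weak$^*$. I would address this by a compactness-and-uniqueness argument — any weak$^*$-subsequential limit $y$ of $x_{j_\star}(\theta_{t_k}\hat u)$ lies in $Q(u_0)$ by compactness of $\QC$, and the $\Phi$-invariance of $S_{j_\star}$ combined with the uniqueness clause of Lemma~\ref{lem_SL}(ii), applied on the growing windows where $\theta_{t_k}\hat u$ agrees with $u_0$, should identify $y$ with $z_{j_\star}$ and yield the desired contradiction.
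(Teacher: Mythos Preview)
Your route is quite different from the paper's. The paper never builds global sheets or a connecting orbit; instead it uses hypothesis (ii) for \emph{all} $\rho\in(0,1]$ (you only invoke $\rho=1$) and studies the family $\Sigma^{\rho}$ with control range $u_0+\rho(U-u_0)$. Each equilibrium $z_i\in Q(u_0)$ lies in $\inner D_i^{\rho}$ for a control set $D_i^{\rho}$, hence in a chain control set $E_i^{\rho}\subset Q$ which, by uniform hyperbolicity, equals $\cl D_i^{\rho}$. As $\rho\searrow 0$ the $E_i^{\rho}$ shrink to the pairwise disjoint singletons $\{z_i\}$, while $E_i^{1}=Q$ for all $i$; the merging that must occur as $\rho$ increases is then ruled out by the lower semicontinuity of $\rho\mapsto D_i^{\rho}$, forcing $n=1$.

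Your argument has a genuine gap at exactly the point you flag. A minor issue first: your proof that the sheets are $\Phi$-invariant via continuity of $\sigma_t(u,j)$ needs $t\mapsto x_k(\theta_t u)$ continuous, hence $t\mapsto\theta_t u$ continuous in $\|\cdot\|_\infty$, which fails for generic $u$; this can be repaired by a direct equivariance computation for the shadowing maps, using $\theta_t u_0=u_0$ and $\varphi(t,z_j,u_0)=z_j$. The serious problem is the last step. Your sheets $S_j$ are graphs of $\|\cdot\|_\infty$-continuous maps and there is no reason for them to be weak$^*$-closed; indeed, if $n\geq 2$ they \emph{cannot} all be weak$^*$-closed, since $\QC$ is connected as a chain recurrent component. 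The ``growing windows'' fix does not help: for $t_k\to+\infty$ you already know $x_{j_\star}(\theta_{t_k}\hat u)=z_2$ exactly, so the limit $y$ is $z_2$, and what you need is $z_2=z_{j_\star}=x_{j_\star}(u_0)$. But if $z_2\neq z_{j_\star}$ their $u_0$-orbits are the constant sequences at $z_2$ and $z_{j_\star}$, at fixed positive distance for all times, so no uniqueness clause of Lemma~\ref{lem_SL} applies. The windows on which $\theta_{t_k}\hat u$ coincides with $u_0$ tell you about the trajectory $\varphi(\cdot,z_2,\theta_{t_k}\hat u)$, not about the sheet label, which was assigned via an $L^\infty$-path from $u_0$ to $\theta_{t_k}\hat u$ whose $L^\infty$-length does not shrink as $t_k\to\infty$. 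Without an independent argument that the $x_j$ are weak$^*$-continuous (which would already finish the proof by disconnecting $\QC$), the contradiction does not close.
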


\begin{remark}
Before proving the theorem, we note that assumption (ii) is in particular satisfied if the system is locally controllable at $(u_0,x)$ for each $x \in Q(u_0)$ (using arbitrarily small control ranges around $u_0$). A sufficient condition for this to hold, which is independent of $\rho$, is the controllability of the linearization around $(u_0,x)$.%
\end{remark}

\begin{proof}
Without loss of generality, we assume that $u_0(t) \equiv 0$. Let $Q(u_0) = \{x_1,\ldots,x_n\}$. We consider for each $\rho \in (0,1]$ the control-affine system%
\begin{equation*}
  \Sigma^{\rho}:\ \dot{x}(t) = f_0(x(t)) + \sum_{i=1}^m u_i(t)f_i(x(t)),\quad u\in\UC^{\rho}.%
\end{equation*}
From the assumptions (i) and (ii) it follows by \cite[Cor.~4.1.7]{CKl} that each $x_i$ is contained in the interior of a control set $D_i^{\rho}$ of $\Sigma^{\rho}$. Each $D_i^{\rho}$ is contained in a unique chain control set $E_i^{\rho}$ of $\Sigma^{\rho}$ (cf.~\cite[Cor.~4.3.12]{CKl}). By \cite[Cor.~3.1.14]{CKl} the chain control sets depend upper semicontinuously on $\rho$, hence $E_i^{\rho} \subset Q = E_i^1$ for every $1 \leq i \leq n$ and $\rho\in(0,1]$. This implies that each $E_i^{\rho}$ is uniformly hyperbolic. By \cite[Thm.~3]{CDu} it follows that $E_i^{\rho} = \cl D_i^{\rho}$. If $C_i$ denotes the chain recurrent component of the uncontrolled system $\dot{x} = f_0(x)$ which contains the equilibrium $x_i$, then $C_i \subset E_i^{\rho}$ for each $\rho$, because otherwise $E_i^{\rho} \cup C_i$ would satisfy the first two properties of chain control sets, contradicting maximality of $E_i^{\rho}$. Since each chain recurrent component is connected and $C_i \subset Q(u_0)$, we have $C_i = \{x_i\}$. By \cite[Cor.~3.4.10]{CKl}, the chain control set $E_i^{\rho}$ shrinks to $\{x_i\}$ as $\rho\searrow0$. Hence, for small $\rho$, the sets $E_i^{\rho}$ are pairwisely disjoint. Since $E_i^1 = Q$ for each $i$, at some point the chain control sets have to merge as $\rho$ increases. Since, by \cite[Thm.~3.1.12]{CKl}, the control sets $D_i^{\rho}$ depend lower semicontinuously on $\rho$, this is a contradiction if $n>1$. It follows that $n=1$ and Corollary \ref{cor_ops} yields the assertion.%
\end{proof}

\begin{remark}
Of course, in many cases it will be easier to check directly that $Q(u)$ is a single equilibrium for some constant control function $u$ than verifying the conditions of the preceding theorem. We also note that the fact that $\QC$ is a graph over $\UC$ implies the existence of a topological conjugacy between the shift flow $\theta$ on $\UC$ and the restriction of the control flow to $\QC$ (cf.~\cite{DK1}).%
\end{remark}

\section{Application to invariance entropy}\label{sec_conseq}%

The invariance entropy of a controlled invariant subset $Q$ of $M$ measures the complexity of the control task of keeping the state inside $Q$. In general, it is defined as follows. A pair $(K,Q)$ of subsets of $M$ is called admissible if $K$ is compact and for every $x\in K$ there is $u\in\UC$ with $\varphi(\R_+,x,u) \subset Q$. In particular, if $K=Q$, this means that $Q$ is a compact and controlled invariant set. For $\tau>0$, a set $\SC\subset\UC$ is $(\tau,K,Q)$-spanning if for every $x\in K$ there is $u\in\SC$ with $\varphi([0,\tau],x,u) \subset Q$. Then $r_{\inv}(\tau,K,Q)$ denotes the number of elements in a minimal such set and we put $r_{\inv}(\tau,K,Q) := \infty$ if no finite $(\tau,K,Q)$-spanning set exists. The \emph{invariance entropy} of $(K,Q)$ is%
\begin{equation*}
  h_{\inv}(K,Q) := \limsup_{\tau\rightarrow\infty}\frac{1}{\tau}\log r_{\inv}(\tau,K,Q),%
\end{equation*}
where $\log$ is the natural logarithm. From \cite[Thm.~5.4]{DK1} we can conclude the following result on the invariance entropy of admissible pairs $(K,Q)$, where $Q$ is a uniformly hyperbolic chain control set. The difference to \cite[Thm.~5.4]{DK1} is that we do not have to assume explicitly anymore that $\QC$ is a graph over $\UC$.%

\begin{theorem}
Consider the control-affine system $\Sigma$ with the uniformly hyperbolic chain control set $Q$ with isolated lift $\QC$. Let the assumptions (i) and (ii) of Theorem \ref{thm_mainres} be satisfied, or alternatively, assume that $Q(u)$ is a singleton for some $u\in\UC$. Then $Q$ is the closure of a control set $D$ and for every compact set $K\subset D$ of positive volume the pair $(K,Q)$ is admissible and its invariance entropy satisfies%
\begin{equation*}
  h_{\inv}(K,Q) = \inf_{(u,x)\in\QC}\limsup_{\tau\rightarrow\infty}\frac{1}{\tau}\log\left|\det(\rmd\varphi_{\tau,u})|_{E^+_{u,x}}:E^+_{u,x}\rightarrow E^+_{\Phi_{\tau}(u,x)}\right|.%
\end{equation*}
\end{theorem}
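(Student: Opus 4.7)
The plan is to recognize that this theorem is essentially a rewrapping of Theorem 5.4 of \cite{DK1}: that result already gives the displayed formula for $h_{\inv}(K,Q)$ under the additional assumption that $\QC$ is a graph over $\UC$, so all I need to do is verify that under either of the two hypothesis sets on offer, the graph property is now automatic, and then read off the rest.

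First, I would establish the graph property of $\QC$. Under assumptions (i) and (ii) of Theorem \ref{thm_mainres}, this is exactly the conclusion of that theorem. Under the alternative assumption that $Q(u)$ is a singleton for some $u\in\UC$, Corollary \ref{cor_ops} gives the same conclusion directly. Either way, $\QC$ is the graph of a continuous map $\UC\rightarrow Q$.

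Second, I would argue that $Q$ is the closure of a control set $D$ and that the admissibility clause of the statement is nontrivial. Since $\QC$ is isolated and uniformly hyperbolic, \cite[Thm.~3]{CDu} applies in exactly the form used at the end of the proof of Theorem \ref{thm_mainres}, yielding $Q = \cl D$ for a (unique) control set $D$. In the hypothesis (i)--(ii) case, the proof of Theorem \ref{thm_mainres} already shows that $x_1\in\inner D_1^\rho$ for each $\rho\in(0,1]$, so $D$ has nonempty interior; in the singleton case, the same conclusion is part of the statement of \cite[Thm.~3]{CDu}. Hence compact $K\subset D$ of positive volume exist, and for any such $K$ every $x\in K\subset Q$ admits $u\in\UC$ with $\varphi(\R,x,u)\subset Q$ by the very definition of $\QC$; in particular $\varphi(\R_+,x,u)\subset Q$, so $(K,Q)$ is admissible.

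Third, with the graph property in hand, I would simply invoke \cite[Thm.~5.4]{DK1} to obtain the formula for $h_{\inv}(K,Q)$ verbatim. I do not anticipate any real obstacle in this proof: all substantive work has been carried out in Sections \ref{sec_mzh} and \ref{sec_mr}, and the only thing to be careful about is matching each of the two alternative hypothesis sets to the correct earlier result (Theorem \ref{thm_mainres} vs.\ Corollary \ref{cor_ops}) so that \cite[Thm.~5.4]{DK1} can be applied in a single unified step.
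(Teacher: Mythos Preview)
Your proposal is correct and matches the paper's approach exactly: the paper gives no separate proof for this theorem but simply notes, in the sentence preceding it, that the result follows from \cite[Thm.~5.4]{DK1} once the graph property of $\QC$ is in hand, which is precisely what you do by invoking Theorem~\ref{thm_mainres} or Corollary~\ref{cor_ops} as appropriate. Your second paragraph fills in slightly more detail about $Q=\cl D$ and admissibility than the paper does, but this is harmless (and indeed all of that is already part of the conclusion of \cite[Thm.~5.4]{DK1}).
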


\begin{remark}
The paper \cite{DK2} provides a rich class of examples for uniformly hyperbolic chain control sets that arise on the flag manifolds of a semisimple Lie group. The control-affine system in this case is induced by a right-invariant system on the group.%
\end{remark}


\begin{thebibliography}{99}
\bibitem[1]{CDu} C. Colonius, W. Du. \emph{Hyperbolic control sets and chain control sets}. J. Dynam. Control Systems 7 (2001), no. 1, 49--59.%
\bibitem[2]{CKl} F. Colonius, W. Kliemann. \emph{The dynamics of control}. Birkh\"{a}user, Boston, 2000.%
\bibitem[3]{DK1} A. Da Silva, C. Kawan. \emph{Invariance entropy of hyperbolic control sets}. Discrete Contin. Dyn. Syst. 36 (2016), no. 1, 97--136.%
\bibitem[4]{DK2} A. Da Silva, C. Kawan. \emph{Hyperbolic chain control sets on flag manifolds}. Submitted (2014). Preprint, Feb.~2014. arXiv:1402.5841 [math.OC]
\bibitem[5]{KHa} A. Katok, B. Hasselblatt. \emph{Introduction to the modern theory of dynamical systems}. Cambridge University Press, 1995.%
\bibitem[6]{Kaw} C. Kawan. \emph{Invariance entropy for deterministic control systems. An introduction}. Lecture Notes in Mathematics, 2089. Springer, 2013.%
\bibitem[7]{Liu} P.--D. Liu. \emph{Random perturbations of Axiom A basic sets}. J. Statist. Phys. 90 (1998), no. 1-2, 467--490.%
\bibitem[8]{MZh} K. R. Meyer, X. Zhang. \emph{Stability of skew dynamical systems}. J. Differential Equations 132 (1996), no. 1, 66--86.%
\end{thebibliography}
\end{document}